
\documentclass[12pt,a4paper,leqno]{amsart}

\usepackage{amssymb,amsmath,latexsym,amsthm,enumerate}

\theoremstyle{thmit} 
\newtheorem{thm}{Theorem}[section]
\newtheorem{lem}[thm]{Lemma}

\newtheorem{prop}[thm]{Proposition}
\newtheorem{conj}[thm]{Conjecture}

\usepackage{mathrsfs}

\theoremstyle{definition}
\newtheorem{remark}[thm]{Remark}

\makeatletter
\@namedef{subjclassname@2020}{%
  \textup{2020} Mathematics Subject Classification}
\makeatother

\numberwithin{equation}{section}

\allowdisplaybreaks

\begin{document} 

\title[A function-field analogue]{A function-field 
analogue of the Goldbach counting function and the associated Dirichlet series}
\author{Shigeki Egami}
\address{S. Egami: 1-2-704 Naito-machi, Shinjuku-ku, 
Tokyo 160-0014, Japan}
\email{egami-sg@shibaura-it.ac.jp}
\author {Kohji Matsumoto}
\address{K. Matsumoto: Graduate School of Mathematics, 
Nagoya University, 
Chikusa-ku, Nagoya 464-8602, Japan}
\email{kohjimat@math.nagoya-u.ac.jp}

\subjclass[2020]{Primary 11M41; Secondary 11M06, 11M26}

\keywords{
Goldbach counting function; function field; von Mangoldt function; meromorphic
continuation; natural boundary
}
\thanks{This work was supported by Japan Society for the Promotion of Science, Grant-in-Aid for Scientific Research No. 22K03267 (K. Matsumoto).}

\maketitle

\begin{abstract}
We consider a function-field analogue of Dirichlet series associated with the Goldbach
counting function, and prove that it can, or cannot, be continued meromorphically to
the whole plane.    When it cannot, we further prove the existence of the natural
boundary of it.
\end{abstract}

\bigskip
\section{Introduction}\label{sec-1}

Let $\mathbb{N}$ be the set of positive integers, 
$\mathbb{N}_0=\mathbb{N}\cup\{0\}$,
$\mathbb{Z}$ the set of rational integers,
$\mathbb{Q}$ the set of rational
numbers, $\mathbb{R}$ the set of real numbers,
and $\mathbb{C}$ the set of complex numbers.

In their study \cite{HL24} on Goldbach's problem, Hardy and Littlewood considered
the function
\begin{align}\label{G_2-def}
G_2(n)=\sum_{\substack{k,m\in\mathbb{N} \\k+m=n}}
\Lambda(k)\Lambda(m) \qquad (n\in\mathbb{N}),
\end{align}
where $\Lambda(\cdot)$ denotes the von Mangoldt function.
Later, Fujii \cite{Fu91} studied the average $\sum_{n\leq X}G_2(n)$ extensively.

Inspired by Fujii's work, the authors \cite{EM07} considered the Dirichlet series
\begin{align}\label{Phi_2-def}
\Phi_2(s)=\sum_{n=1}^{\infty}\frac{G_2(n)}{n^s},
\end{align}
which is convergent absolutely in the region $\Re s>2$.
In \cite{EM07} it is proved that, under the Riemann hypothesis (RH) for the Riemann
zeta-function $\zeta(s)$, $\Phi_2(s)$ can be continued meromorphically to
$\Re s>1$.   Then the authors raised the following
\begin{conj}\label{conj-EM}
The line $\Re s=1$ is the natural boundary of $\Phi_2(s)$.
\end{conj}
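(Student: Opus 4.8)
The plan is to run the explicit-formula analysis of $\Phi_2$, reduce Conjecture~\ref{conj-EM} to a non-vanishing statement for a family of residues, and then isolate that non-vanishing as the real obstacle. Put $\theta(t)=\sum_{n\ge1}\Lambda(n)e^{-nt}$. Writing $\theta(t)^2=\sum_{k,m}\Lambda(k)\Lambda(m)e^{-(k+m)t}$ and using $\int_0^\infty t^{s-1}e^{-(k+m)t}\,dt=\Gamma(s)(k+m)^{-s}$ together with \eqref{G_2-def} gives, for $\Re s>2$,
\[
\Phi_2(s)=\frac{1}{\Gamma(s)}\int_0^\infty t^{s-1}\theta(t)^2\,dt ,
\]
and since $t\ge1$ contributes an entire function all the analytic information sits in $\theta(t)$ as $t\to0^+$. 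Starting from $\theta(t)=\frac{1}{2\pi i}\int_{(c)}\Gamma(w)\bigl(-\frac{\zeta'}{\zeta}(w)\bigr)t^{-w}\,dw$ with $c>1$ and shifting the contour to $\Re w=\tfrac12-\varepsilon$ — the horizontal parts vanish, and under RH the residue sum over the zeros converges absolutely because $|\Gamma(\tfrac12+i\gamma)|\asymp e^{-\pi|\gamma|/2}$ while $\#\{\rho:0<\Im\rho<T\}\ll T\log T$ — we obtain
\[
\theta(t)=\frac1t-\sum_{\rho}m_\rho\,\Gamma(\rho)\,t^{-\rho}+R(t),\qquad R(t)=\frac{1}{2\pi i}\int_{(\frac12-\varepsilon)}\Gamma(w)\Bigl(-\tfrac{\zeta'}{\zeta}(w)\Bigr)t^{-w}\,dw=O(t^{-\frac12+\varepsilon}),
\]
with $\rho$ running over the nontrivial zeros of $\zeta$ and $m_\rho$ their multiplicities.

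Next I would substitute this into $\theta(t)^2$ and integrate $t^{s-1}(\cdot)$ over $(0,1)$. The contributions of $t^{-2}$, of $t^{-1}R(t)$, of $t^{-1}\sum_\rho(\cdot)$, of $R(t)\sum_\rho(\cdot)$ and of $R(t)^2$ all continue holomorphically across the line $\Re s=1$ (a routine contour-shift check puts their singularities in $\Re s\ge\tfrac32-\varepsilon$ or $\Re s\le1-2\varepsilon$). The sole exception is
\[
\int_0^1 t^{s-1}\Bigl(\sum_\rho m_\rho\Gamma(\rho)t^{-\rho}\Bigr)^{2}dt=\sum_{\rho,\rho'}\frac{m_\rho m_{\rho'}\Gamma(\rho)\Gamma(\rho')}{s-\rho-\rho'},
\]
which, again by the same absolute convergence, is meromorphic on $\mathbb{C}$ with simple poles exactly at $s=\rho+\rho'$. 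Hence on a neighbourhood of $\Re s=1$ one has $\Phi_2(s)=\Gamma(s)^{-1}\bigl(H(s)+\sum_{\rho,\rho'}m_\rho m_{\rho'}\Gamma(\rho)\Gamma(\rho')(s-\rho-\rho')^{-1}\bigr)$ with $H$ holomorphic there and $\Gamma(s)^{-1}$ holomorphic and nonvanishing on the line; this is consistent with the meromorphy on $\Re s>1$ established in \cite{EM07}.

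The third step is the density of the pole set. Under RH it is $\{1+i(\gamma+\gamma')\}$, and since $\#\{\rho:0<\Im\rho<T\}\sim\frac{T}{2\pi}\log T$ the gaps $\gamma_{n+1}-\gamma_n$ tend to $0$, so $\{\gamma_n-\gamma_m\}$ — a subset of $\{\gamma+\gamma'\}$ once conjugate zeros are allowed — is dense in $\mathbb{R}$. Consequently, \emph{provided} that
\[
\Res_{s=1+it_0}\Phi_2(s)=\frac{1}{\Gamma(1+it_0)}\sum_{\gamma+\gamma'=t_0}m_\gamma m_{\gamma'}\,\Gamma(\tfrac12+i\gamma)\Gamma(\tfrac12+i\gamma')\neq 0
\]
for a set of $t_0$ dense in $\mathbb{R}$ (the sum being over ordinates $\gamma,\gamma'$ of nontrivial zeros), the function $\Phi_2$ has a dense set of poles on $\Re s=1$ and admits no continuation across it — which is Conjecture~\ref{conj-EM}.

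The hard part will be this non-vanishing. The associated almost-periodic function factors as $\sum_{\gamma,\gamma'}m_\gamma m_{\gamma'}\Gamma(\tfrac12+i\gamma)\Gamma(\tfrac12+i\gamma')e^{iu(\gamma+\gamma')}=\bigl(\sum_\gamma m_\gamma\Gamma(\tfrac12+i\gamma)e^{iu\gamma}\bigr)^2$, which is not identically zero since $\Gamma$ never vanishes; in particular \emph{some} residue is nonzero (and at $t_0=0$ one even gets $\sum_\gamma m_\gamma^2|\Gamma(\tfrac12+i\gamma)|^2>0$, recovering the pole of $\Phi_2$ at $s=1$). But forcing non-vanishing on a \emph{dense} set of $t_0$ requires controlling the spectrum of this square well enough to exclude systematic cancellation, which seems to need ruling out additive coincidences $\gamma_a+\gamma_b=\gamma_c+\gamma_d$ among ordinates of zeros — essentially simplicity of the zeros together with a $\mathbb{Q}$-linear-independence property — and that lies beyond RH. This is what keeps Conjecture~\ref{conj-EM} open over $\mathbb{Q}$, and it is precisely the point at which the function-field analogue studied in this paper is more tractable, since there the relevant sums are governed by Frobenius eigenvalues and can be analysed explicitly.
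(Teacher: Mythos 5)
The statement you were asked about is a \emph{conjecture}, not a theorem: the paper offers no proof of it, and indeed presents it as open. What the paper records is that, under RH, $\Phi_2(s)$ continues meromorphically to $\Re s>1$ (from \cite{EM07}), and that the natural-boundary statement follows if one additionally assumes the diophantine hypothesis (B) on sums of zero ordinates (\cite{EM07}), later weakened to the purely combinatorial hypothesis (A) by Bhowmik and Schlage-Puchta \cite{BSP11}. Your proposal is therefore not, and could not honestly be, a proof; to your credit you say so explicitly in your final paragraph.

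What you have written is essentially the known conditional reduction, and it matches the route of \cite{EM07}/\cite{BSP11} in substance even though you package the continuation via $\Gamma(s)^{-1}\int_0^\infty t^{s-1}\theta(t)^2\,dt$ and the explicit formula for $\theta(t)$, rather than via the Mellin--Barnes separation of the two von Mangoldt sums used in \cite{EM07} and in Section \ref{sec-4} of this paper. The endpoint is the same: simple poles at $s=\rho+\rho'$ with residues proportional to $\sum_{\gamma+\gamma'=t_0}m_\gamma m_{\gamma'}\Gamma(\tfrac12+i\gamma)\Gamma(\tfrac12+i\gamma')$, a dense candidate pole set on $\Re s=1$, and the single genuine obstruction that coincident representations $\gamma_a+\gamma_b=\gamma_c+\gamma_d$ could in principle cancel a residue. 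That obstruction is exactly Conjecture (A) of the paper (and your remark that near-coincidences must also be controlled for the residue sum to be meaningfully separated is exactly what Conjecture (B) quantifies, cf.\ Remark \ref{key-ineq}). Two smaller points to tighten if you ever write this up: the density of $\{\gamma+\gamma'\}$ in $\mathbb{R}$ needs more than ``the average gap tends to $0$'' --- one must actually locate zeros in short intervals, which is where the zero-counting arguments of \cite{BSP11} come in; and the interchange of $\int_0^1 t^{s-1}(\cdots)^2\,dt$ with the double sum over zeros, giving $\sum_{\rho,\rho'}(s-\rho-\rho')^{-1}$, requires justification precisely because the candidate poles are dense on the line you are trying to reach. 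Neither of these repairs the fundamental gap: without (A) or (B) the non-vanishing of the residues on a dense set is open, which is why the statement remains a conjecture over $\mathbb{Q}$ and why the present paper retreats to the function-field model, where the analogue of \eqref{5-2} is an unconditional consequence of Gel'fond's theorem.
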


Let $\mathcal{I}$ be the set of all imaginary parts of non-trivial zeros of $\zeta(s)$.
A well-known conjecture predicts that the positive elements of $\mathcal{I}$ would be
linearly independent over $\mathbb{Q}$.   The following statement is a special case of
this conjecture:

(A) If $\gamma_j\in\mathcal{I}$ ($1\leq j\leq 4$) and $\gamma_1+\gamma_2=
\gamma_3+\gamma_4(\neq 0)$, then $(\gamma_3,\gamma_4)$ equals $(\gamma_1,\gamma_2)$ or
$(\gamma_2,\gamma_1)$.

In \cite{EM07}, the authors introduced the following quantitative version of
Conjecture (A):

(B) There exists a constant $\alpha$, with $0<\alpha<\pi/2$, such that if 
$\gamma_j\in\mathcal{I}$ ($1\leq j\leq 4$), $\gamma_1+\gamma_2\neq 0$, and
$(\gamma_3,\gamma_4)$ is neither equal to $(\gamma_1,\Gamma_2)$ nor to
$(\gamma_2,\gamma_1)$, then
\begin{align}\label{ineq-B}
|(\gamma_1+\gamma_2)-(\gamma_3+\gamma_4)|\geq
\exp(-\alpha(|\gamma_1|+|\gamma_2|+|\gamma_2|+|\gamma_4|))
\end{align}
holds.

In \cite{EM07} the authors showed that Conjecture \ref{conj-EM} can be deduced if we
assume the RH and Conjecture (B).
Bhowmik and Schlage-Puchta \cite{BSP11} proved that Conjecture (B) is actually not necessary
to deduce Conjecture \ref{conj-EM}; it is enough to assume the RH and Conjecture (A).

The aim of the present note is to consider the function-field analogue of the above
results.   
The situation becomes much simpler, so this is a kind of ``toy model''. 
However it is still interesting because, in the function-field case, we can give an
{\it unconditional proof} of the existence or non-existence of the natural boundary.

\section {statement of results}\label{sec-2}

Let $\mathbb{F}_i$ ($i=1,2$) be finite fields whose cardinalities are 
$q_i=p_i^{r_i}$ with primes $p_1,p_2$.
Let $K_i$ be function fields of one variable over $\mathbb{F}_i$.
The zeta-function of $K_i$ is defined by
\begin{align}\label{zeta-def}
\zeta_{K_i}(s)=\sum_{A_i}(NA_i)^{-s}=\prod_{P_i} (1-(NP_i)^{-s})^{-1},
\end{align}
where $s$ is a complex variable, $A_i$ runs over all effective divisors of $K_i$,
$P_i$ runs over all primes of $K_i$, 
and $NA_i=q_i^{\deg A_i}$.
The above series expression can be rewritten as $\sum_{n=1}^{\infty}b_i(n)q_i^{-ns}$,
where $b_i(n)$ denotes the number of effective divisors of degree $n$ in $K_i$. 
Therefore \eqref{zeta-def} is convergent absolutely in the domain $\Re s>1$, because
\begin{align}\label{est-b}
b_i(n)=O(q_i^n)
\end{align}
holds (see \cite[p.52]{Ro02}).

It is well known that the above zeta-function can be written as 
$\zeta_{K_i}(s)=Z_{K_i}(q_i^{-s})$, with (putting $u_i=u_i(s)=q_i^{-s}$)
\begin{align}\label{expression}
Z_{K_i}(u_i)=\frac{L_{K_i}(u_i)}{(1-u_i)(1-q_i u_i)},\quad
L_{K_i}(u_i)=\prod_{j=1}^{2g_i}(1-\pi_{ji}u_i),
\end{align}
where $g_i$ is the genus of $K_i$ and $\pi_{ji}\in\mathbb{C}$, $|\pi_{ji}|=q_i^{1/2}$
(see, for example, \cite[Theorems 5.9 and 5.10]{Ro02}).

From \eqref{zeta-def} it easily follows that
\begin{align}\label{log-der}
\frac{\zeta_{K_i}^{\prime}}{\zeta_{K_i}}(s)=-\sum_{P_i}\sum_{h=1}^{\infty}
\frac{\log (NP_i)}{(NP_i)^{hs}}
=-\sum_{A_i} \frac{\Lambda_{K_i}(A_i)}{(NA_i)^s},
\end{align}
where $\Lambda_{K_i}(A_i)$ is the
function-field analogue of the von Mangoldt function defined by
\begin{align}\label{vonMangoldt}
\Lambda_{K_i}(A_i)=\begin{cases}
\log(NP_i) & {\rm if} \; A_i=h P_i \;{\rm with}\;{\rm some}\; P_i,h\\
0 & {\rm otherwise}.
\end{cases}
\end{align}
Using \eqref{vonMangoldt} we now define the analogue of \eqref{G_2-def} by
\begin{align}\label{G-def}
G_{2}^{FF}(n)=\sum_{\substack{A_1,A_2 \\ NA_1+NA_2=n}}\Lambda_{K_1}(A_1)\Lambda_{K_2}(A_2).
\end{align}
This function may be regarded as the counting function of an analogue of the Goldbach
problem in function fields, that is, how frequently $n\in\mathbb{N}$ can be written as
$n=NP_1+NP_2$, where $P_i$ are primes of $K_i$ ($i=1,2$).   The Dirichlet series 
associated with \eqref{G-def} is
\begin{align}\label{DS-def}
\Phi_2^{FF}(s)=\sum_{n=1}^{\infty}\frac{G_2^{FF}(n)}{n^s}.
\end{align}

We first notice the following
\begin{prop}\label{prop-1}
The Dirichlet series \eqref{DS-def} is convergent absolutely in the region
$\Re s>2$.
\end{prop}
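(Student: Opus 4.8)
The plan is to reduce the absolute convergence of the Dirichlet series \eqref{DS-def} to the already-known convergence of the logarithmic-derivative series appearing in \eqref{log-der}, by means of an elementary inequality. The first step is to observe that $\Lambda_{K_i}(A_i)\ge 0$ for every effective divisor $A_i$, so every summand defining $G_2^{FF}(n)$ in \eqref{G-def} is non-negative; hence, for real $s=\sigma$, the rearrangement
\[
\sum_{n=1}^{\infty}\frac{G_2^{FF}(n)}{n^{\sigma}}
=\sum_{A_1,A_2}\frac{\Lambda_{K_1}(A_1)\Lambda_{K_2}(A_2)}{(NA_1+NA_2)^{\sigma}}
\]
is legitimate, the right-hand sum being over all pairs of effective divisors $A_i$ of $K_i$ and both sides allowed a priori to be $+\infty$. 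Since the absolute value of the general term of \eqref{DS-def} on the line $\Re s=\sigma$ coincides with the corresponding summand on the right, it suffices to prove that this right-hand side is finite for $\sigma>2$.

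The second step is to invoke the trivial bound $NA_1+NA_2\ge\sqrt{NA_1\cdot NA_2}$ (immediate from $NA_1+NA_2\ge\max(NA_1,NA_2)$), which, raised to the power $\sigma>0$, gives $(NA_1+NA_2)^{\sigma}\ge (NA_1)^{\sigma/2}(NA_2)^{\sigma/2}$. Substituting and separating the variables,
\[
\sum_{A_1,A_2}\frac{\Lambda_{K_1}(A_1)\Lambda_{K_2}(A_2)}{(NA_1+NA_2)^{\sigma}}
\le\Biggl(\sum_{A_1}\frac{\Lambda_{K_1}(A_1)}{(NA_1)^{\sigma/2}}\Biggr)
\Biggl(\sum_{A_2}\frac{\Lambda_{K_2}(A_2)}{(NA_2)^{\sigma/2}}\Biggr),
\]
and by \eqref{log-der} each factor equals $-(\zeta_{K_i}'/\zeta_{K_i})(\sigma/2)$.

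It then remains to check that the single series $\sum_{A_i}\Lambda_{K_i}(A_i)(NA_i)^{-\sigma/2}$ converges whenever $\sigma/2>1$. Grouping effective divisors by degree and using $\Lambda_{K_i}(A_i)\le\log NA_i=(\deg A_i)\log q_i$ together with the count $b_i(d)$ of effective divisors of degree $d$, one obtains
\[
\sum_{\deg A_i=d}\Lambda_{K_i}(A_i)\le d\,(\log q_i)\,b_i(d)=O\bigl(d\,q_i^d\bigr)
\]
by \eqref{est-b}; hence the series is dominated by a constant multiple of $\sum_{d\ge1}d\,q_i^{d(1-\sigma/2)}$, which converges exactly when $1-\sigma/2<0$, i.e.\ $\sigma>2$. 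Chaining the three steps yields the proposition. I do not anticipate a real obstacle here: the only delicate points are the justification of the rearrangement (supplied by the non-negativity of $\Lambda_{K_i}$) and keeping the factor $1/2$ straight, so that the condition $\sigma/2>1$ on each factor reproduces precisely the stated range $\Re s>2$. (Alternatively one could pin down the abscissa of convergence of each single series directly from the factorization $\zeta_{K_i}(s)=Z_{K_i}(q_i^{-s})$ in \eqref{expression}, but the crude estimate above already suffices.)
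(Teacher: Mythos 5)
Your proof is correct and follows essentially the same route as the paper: both arguments separate the variables via $(NA_1+NA_2)^{\sigma}\ge (NA_1)^{\sigma/2}(NA_2)^{\sigma/2}$ and then bound each single series by grouping divisors by degree and invoking the estimate $b_i(n)=O(q_i^n)$ from \eqref{est-b}. The only cosmetic difference is that you sum over all effective divisors and identify each factor with $-(\zeta_{K_i}'/\zeta_{K_i})(\sigma/2)$, whereas the paper parametrizes the support of $\Lambda_{K_i}$ by pairs $(h_i,P_i)$; the underlying estimates are identical.
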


The main results in the present article are as follows.

\begin{thm}\label{thm-1}
If $p_1=p_2$, then $\Phi_2^{FF}(s)$ can be continued meromorphically to the whole
plane $\mathbb{C}$.
\end{thm}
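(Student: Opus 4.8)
The plan is to express $\Phi_2^{FF}(s)$ as a double sum over the degrees and to exploit that, when $p_1=p_2=:p$, every norm $NA_i=q_i^{\deg A_i}$ is a power of the single prime $p$. Grouping \eqref{G-def}--\eqref{DS-def} by $a=\deg A_1$, $b=\deg A_2$ and setting $\Psi_i(a)=\sum_{\deg A_i=a}\Lambda_{K_i}(A_i)$, one reads off from \eqref{log-der} and \eqref{expression} the exact formula $\Psi_i(a)=(\log q_i)\bigl(q_i^{a}+1-\sum_{j=1}^{2g_i}\pi_{ji}^{a}\bigr)$, whence
\[
\Phi_2^{FF}(s)=(\log q_1)(\log q_2)\sum_{a,b\ge 1}\frac{\bigl(q_1^{a}+1-\sum_j\pi_{j1}^{a}\bigr)\bigl(q_2^{b}+1-\sum_j\pi_{j2}^{b}\bigr)}{(q_1^{a}+q_2^{b})^{s}}.
\]
Expanding the numerator, $\Phi_2^{FF}$ becomes a finite $\mathbb{C}$-linear combination of series $F(s)=\sum_{a,b\ge 1}\rho^{a}\sigma^{b}(q_1^{a}+q_2^{b})^{-s}$ with $\rho\in\{q_1,1,\pi_{11},\dots\}$ and $\sigma\in\{q_2,1,\pi_{12},\dots\}$; since $1\le|\rho|\le q_1$ and $1\le|\sigma|\le q_2$ each such $F$ converges for $\Re s>2$, so it suffices to continue each $F$ meromorphically to $\mathbb{C}$. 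Here the hypothesis $p_1=p_2$ enters: writing $q_i=p^{r_i}$ we have $q_1^{a}+q_2^{b}=p^{\min(r_1a,r_2b)}\bigl(1+p^{|r_1a-r_2b|}\bigr)$ with an \emph{integer} exponent, so $\bigl(1+p^{|r_1a-r_2b|}\bigr)^{-s}$ can be expanded by the binomial series.

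I would split $F=F_{<}+F_{=}+F_{>}$ according to whether $r_1a<r_2b$, $r_1a=r_2b$, or $r_1a>r_2b$. On the diagonal, with $d=\gcd(r_1,r_2)$ and $\ell=\mathrm{lcm}(r_1,r_2)$, the substitution $a=(r_2/d)t$, $b=(r_1/d)t$ $(t\ge 1)$ gives $q_1^{a}+q_2^{b}=2p^{\ell t}$ and turns the numerator into a fixed finite linear combination of powers $\mu^{t}$, so $F_{=}(s)=2^{-s}\sum_{\mu}c_\mu\,\mu p^{-\ell s}/(1-\mu p^{-\ell s})$, manifestly meromorphic on $\mathbb{C}$. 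For $F_{<}$ (and $F_{>}$ by the symmetric argument, factoring out $q_1^a$) I would factor out $q_2^{b}$ and use $r_2b-r_1a\ge 1$ to write $(q_1^{a}+q_2^{b})^{-s}=\sum_{k\ge 0}\binom{-s}{k}q_2^{-bs}p^{-k(r_2b-r_1a)}$, so that
\[
F_{<}(s)=\sum_{k\ge 0}\binom{-s}{k}G_k(s),\qquad G_k(s)=\sum_{\substack{a,b\ge 1\\ r_1a<r_2b}}(\rho q_1^{k})^{a}(\sigma q_2^{-s-k})^{b}.
\]
For each $k$, summing the inner double series first over $a\in\{1,\dots,\lfloor(r_2b-1)/r_1\rfloor\}$ and then over $b$ in residue classes modulo $r_1$ (note $\lfloor(r_2(b+r_1)-1)/r_1\rfloor=\lfloor(r_2b-1)/r_1\rfloor+r_2$) collapses $G_k(s)$ to a finite combination of geometric series. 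The elementary facts making this work are that all ratios occurring have modulus $<1$ for $\Re s>2$, and that the cancellation $q_1^{r_2k}q_2^{-r_1k}=p^{r_1r_2k}p^{-r_1r_2k}=1$ makes the ratio $(\sigma q_2^{-s-k})^{r_1}(\rho q_1^{k})^{r_2}=\sigma^{r_1}\rho^{r_2}q_2^{-r_1s}$ independent of $k$; consequently each $G_k(s)$ is a rational function of $q_2^{-s}$, hence meromorphic on $\mathbb{C}$, with poles arising only from $1-\sigma^{r_1}\rho^{r_2}q_2^{-r_1s}=0$ and $1-\sigma^{r_1}q_2^{-r_1(s+k)}=0$.

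The heart of the matter, and the only genuine obstacle, is to prove that $\sum_{k\ge 0}\binom{-s}{k}G_k(s)$ converges locally uniformly off the poles, so that it is again meromorphic on $\mathbb{C}$. One estimates the closed form of $G_k$: up to factors bounded and bounded away from $0$ in $k$, every surviving term carries a factor $(\rho q_1^{k})^{N(b_0)}(\sigma q_2^{-s-k})^{b_0}$ with $b_0\in\{1,\dots,r_1\}$ and $N(b_0)=\lfloor(r_2b_0-1)/r_1\rfloor$, whose modulus is $O_s\bigl(p^{k(r_1N(b_0)-r_2b_0)}\bigr)=O_s(p^{-k})$ because $r_1N(b_0)\le r_2b_0-1$; combined with the polynomial bound $|\binom{-s}{k}|\ll_s k^{|\Re s|}$ this yields absolute, locally uniform convergence on compacta avoiding the poles. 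Since those poles form a locally finite set — in any vertical strip only finitely many of the sets $\{s:\sigma^{r_1}q_2^{-r_1(s+k)}=1\}$ meet the strip — each of $F_{<},F_{=},F_{>}$, hence $F$ and therefore $\Phi_2^{FF}$, extends meromorphically to the whole plane. Two routine matters remain: to verify the bookkeeping of the residue-class splitting (the ranges of $a$ and $b$ and the choice of representatives), and to treat the single degenerate term $k=0$, $\rho=1$ (and the $\sigma=1$ counterpart in $F_{>}$), where $\rho q_1^{k}=1$ and the finite geometric sum over $a$ collapses to $\sum_{a=1}^{N(b)}1=N(b)$; this still yields a rational function of $q_2^{-s}$ and changes nothing.
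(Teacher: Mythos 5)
Your proof is correct, but it takes a genuinely different route from the paper's. The paper establishes (for both theorems at once) a Mellin--Barnes integral representation of $\Phi_2^{FF}(s)$ in terms of $(\zeta_{K_1}'/\zeta_{K_1})(s-z)\,(\zeta_{K_2}'/\zeta_{K_2})(z)$, shifts the contour to $\Re z=-N+\varepsilon$, and continues the resulting residue sums $\Sigma_1,\Sigma_{1/2},\Sigma_0,\Sigma_N,R_0$; when $p_1=p_2$ the possible poles lie on vertical arithmetic progressions with commensurable periods, hence are discrete, and the residue sums converge by the vertical periodicity of $\zeta_{K_i}'/\zeta_{K_i}$ together with Stirling's formula. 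You instead work entirely on the Dirichlet-series side: you use the exact formula $\sum_{\deg A_i=a}\Lambda_{K_i}(A_i)=(\log q_i)\bigl(q_i^a+1-\sum_j\pi_{ji}^a\bigr)$ coming from \eqref{expression}, reduce to the model sums $F(s)=\sum_{a,b}\rho^a\sigma^b(q_1^a+q_2^b)^{-s}$, and exploit that for $p_1=p_2$ the quantity $q_1^a+q_2^b$ is $p^{\min(r_1a,r_2b)}(1+p^{-m})$ with an integer $m\ge 1$ off the diagonal, so the binomial expansion collapses everything into geometric series; the decisive cancellation $q_1^{r_2k}q_2^{-r_1k}=1$ and the bound $r_1N(b_0)\le r_2b_0-1$ give the geometric decay in $k$ that beats the polynomial growth of $\binom{-s}{k}$. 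Your argument is more elementary (no Gamma factors, no contour shifting) and more explicit about where the poles can lie (finitely many vertical arithmetic progressions, the rightmost on $\Re s=2$ coming from $\rho=q_1$, $\sigma=q_2$, matching the paper's $\rho(b_1)+\rho(b_2)$), but it is tied to the hypothesis $p_1=p_2$ — for $p_1\ne p_2$ the exponent $r_1a\log p_1-r_2b\log p_2$ is never an integer multiple of a common logarithm and the collapse into geometric series fails — whereas the paper's integral representation is the common starting point for both Theorem \ref{thm-1} and Theorem \ref{thm-2}. The two points you flag as routine (the residue-class bookkeeping and the degenerate term $\rho q_1^k=1$, which occurs only for $k=0$, $\rho=1$ and yields $\sum_b N(b)x^b$, still rational in $x^{r_1}$) are indeed routine, and the interchange $\sum_{a,b}\sum_k\leftrightarrow\sum_k\sum_{a,b}$ is justified for $\Re s>2$ by the same absolute estimates you use for the locally uniform convergence.
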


\begin{thm}\label{thm-2}
If $p_1\neq p_2$, then the vertical line $\Re s=2$ is the natural boundary of
$\Phi_2^{FF}(s)$.
\end{thm}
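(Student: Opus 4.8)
The plan is threefold. (i) Produce a Mellin--Barnes representation of $\Phi_2^{FF}$ valid for $\Re s>2$ and shift the contour once to continue it a little past $\Re s=2$; (ii) in doing so, isolate an explicit sum $\mathcal M(s)$ that carries all the singularities reaching the line $\Re s=2$, the remainder being holomorphic near that line; (iii) show by a direct growth estimate that $\mathcal M$, hence $\Phi_2^{FF}$, is unbounded along approach to a set of points dense in $\Re s=2$, which rules out any meromorphic continuation across the line. (That $\Phi_2^{FF}$ is holomorphic for $\Re s>2$ is Proposition~\ref{prop-1}.)

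For (i)--(ii): write $\Psi_i(m):=\sum_{\deg A_i=m}\Lambda_{K_i}(A_i)=\log q_i\,(q_i^m+e_i(m))$, where $e_i(m)=1-\sum_j\pi_{ji}^m=O(q_i^{m/2})$ by \eqref{expression}. Then $\Phi_2^{FF}(s)=\log q_1\log q_2\sum_{a,b\ge1}(q_1^a+e_1(a))(q_2^b+e_2(b))(q_1^a+q_2^b)^{-s}$, and since $\sqrt{q_1^aq_2^b}\le q_1^a+q_2^b$ and $\#\{(a,b):q_1^a+q_2^b\le X\}\ll(\log X)^2$, the three non-leading terms of the expanded product are holomorphic for $\Re s>3/2$; so it suffices to treat $L(s):=\sum_{a,b\ge1}q_1^aq_2^b(q_1^a+q_2^b)^{-s}$. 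Inserting the beta integral $(X+Y)^{-s}=\frac1{2\pi i}\int_{(c)}\frac{\Gamma(s-z)\Gamma(z)}{\Gamma(s)}X^{-z}Y^{z-s}\,dz$ ($0<c<\Re s$) with $X=q_1^a$, $Y=q_2^b$, and summing the two geometric series, gives for $1<c<\Re s-1$
\begin{align*}
L(s)=\frac1{2\pi i}\int_{(c)}\frac{\Gamma(s-z)\Gamma(z)}{\Gamma(s)}\cdot\frac{q_1^{1-z}}{1-q_1^{1-z}}\cdot\frac{q_2^{1+z-s}}{1-q_2^{1+z-s}}\,dz .
\end{align*}
Shifting the contour leftwards to $\Re z=c'$ with $1/2<c'<1$ crosses only the simple poles $z=1+\frac{2\pi ik}{\log q_1}$ ($k\in\mathbb Z$) of the first quotient, each with residue $\frac1{\log q_1}$; the shifted integral is holomorphic for $\Re s$ near $2$ (both quotients are bounded on $\Re z=c'$, and $\Gamma(s-z)\Gamma(z)$ decays exponentially in $|\operatorname{Im}z|$). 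Collecting the residues and putting $x=q_2^{2-s}$, $\lambda_k=e^{2\pi ik\log q_2/\log q_1}$, $c_k(s)=\Gamma\!\big(s-1-\tfrac{2\pi ik}{\log q_1}\big)\Gamma\!\big(1+\tfrac{2\pi ik}{\log q_1}\big)/\Gamma(s)$, we obtain on $\Re s>2$ that $L(s)=\mathcal M(s)+(\text{holomorphic near }\Re s=2)$ with
\begin{align*}
\mathcal M(s)=\frac1{\log q_1}\sum_{k\in\mathbb Z}c_k(s)\,\frac{x\lambda_k}{1-x\lambda_k} ,
\end{align*}
hence $\Phi_2^{FF}(s)=\log q_1\log q_2\,\mathcal M(s)+(\text{holomorphic near }\Re s=2)$. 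Since $p_1\ne p_2$, $q_1$ and $q_2$ are multiplicatively independent, so $\log q_1/\log q_2\notin\mathbb Q$; thus the $\lambda_k$ are dense on $|x|=1$, i.e.\ the poles $x=\lambda_k^{-1}$ of the individual summands correspond to the points $s=2+2\pi i(\tfrac{k}{\log q_1}+\tfrac{\ell}{\log q_2})$ ($k,\ell\in\mathbb Z$), which are dense on $\Re s=2$.

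Step (iii) is the heart of the matter. Fix such a point $s^*=2+2\pi i(\tfrac{k_0}{\log q_1}+\tfrac{\ell_0}{\log q_2})$, so that $x^*:=q_2^{2-s^*}=\lambda_{k_0}^{-1}$, and let $s\to s^*$ with $\Re s\downarrow2$ and $\operatorname{Im}s=\operatorname{Im}s^*$; then $x=q_2^{2-s}=r\,x^*$ with $r=q_2^{2-\Re s}\uparrow1$, so $x\lambda_{k_0}=r$ and $x\lambda_{k'}=r\,e^{2\pi i(k'-k_0)\beta}$ ($\beta:=\log q_2/\log q_1$). The $k_0$-th summand of $\mathcal M$ equals $\frac1{\log q_1}c_{k_0}(s)\frac r{1-r}$, and $c_{k_0}(s)\to c_{k_0}(s^*)\ne0$ (Gamma functions have no zeros and $\Re s^*=2$), so this summand is $\asymp(1-r)^{-1}$. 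For the rest, $|1-x\lambda_{k'}|\gg\max\!\big(1-r,\ \lVert(k'-k_0)\beta\rVert\big)$, where $\lVert\cdot\rVert$ is distance to $\mathbb Z$; here one invokes \emph{Baker's theorem on linear forms in logarithms}: since $q_1,q_2$ are powers of distinct primes, $\lVert j\beta\rVert\gg|j|^{-\kappa}$ for some $\kappa>0$. Together with the exponential decay $|c_k(s)|\ll e^{-c|k|}$ of the Gamma factors, this bounds the sum of the remaining summands by $O_{k_0}(1)$ as $r\uparrow1$: the $k'$ with $\lVert(k'-k_0)\beta\rVert\ge1-r$ contribute at most $\sum_{k'}e^{-c|k'|}|k'-k_0|^{\kappa}<\infty$, while those with $\lVert(k'-k_0)\beta\rVert<1-r$ have $|k'|\gg(1-r)^{-1/\kappa}$, so their contribution divided by $1-r$ is $\ll(1-r)^{-1}e^{-c'(1-r)^{-1/\kappa}}\to0$. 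Therefore $\mathcal M(s)\to\infty$, and so $\Phi_2^{FF}(s)\to\infty$, as $s\to s^*$ in this way, for every $s^*$ in the dense set above. Finally, if $\Phi_2^{FF}$ continued meromorphically to a disc meeting $\Re s=2$, it would be holomorphic, hence bounded, near any point of that disc that is not one of its finitely many poles there; picking a point of the dense set with this property contradicts the blow-up. Hence $\Re s=2$ is the natural boundary. The one genuinely hard input is the Diophantine estimate in step (iii): it is precisely the unconditional replacement — available here via Baker's theorem — for the conjectural linear-independence properties of zeta zeros (Conjectures (A), (B)) needed in the number-field analogue.
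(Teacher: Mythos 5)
Your proof is correct, and although it rests on the same two pillars as the paper's argument --- density of the points $2+2\pi\sqrt{-1}(k/\log q_1+\ell/\log q_2)$ on $\Re s=2$ (from the irrationality of $\log q_1/\log q_2$), and a Gel'fond/Baker lower bound on the distance from $j\log q_2/\log q_1$ to the nearest integer, used to show that exactly one term of the residue sum blows up under horizontal approach --- your route to the singular part is genuinely different and in one respect cleaner. The paper feeds the full logarithmic derivatives $\zeta_{K_i}^{\prime}/\zeta_{K_i}$ into the Mellin--Barnes integral \eqref{4-2} and must then catalogue all the residue families $\Sigma_1,\Sigma_{1/2},\Sigma_0,\Sigma_N,R_0$; in particular it cannot exclude poles of $R_0(s)$ on $\Re s=2$ and has to argue separately (Remark \ref{rem-R_0}) that any such poles are discrete and harmless. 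You instead invoke the explicit formula $\sum_{\deg A_i=m}\Lambda_{K_i}(A_i)=\log q_i\,(q_i^m+O(q_i^{m/2}))$ (i.e.\ Weil's Riemann hypothesis) \emph{before} the Mellin--Barnes step, which makes all cross terms holomorphic for $\Re s>3/2$ and reduces the problem to the elementary double series $L(s)$; a single short contour shift then exhibits $L$ as $\mathcal{M}(s)$ plus a function holomorphic in a full neighbourhood of the line $\Re s=2$. Your $\mathcal{M}$ is exactly the divergent piece (the $B^{**}$-part, in the paper's notation) of the paper's $\Sigma_1$, and your step (iii) reproduces the content of Lemma \ref{lem-2}, with the same dichotomy between the single resonant index and the Baker-controlled remainder. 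What your streamlining gives up is the global meromorphic-continuation apparatus that the paper also needs for Theorem \ref{thm-1}; what it buys, for Theorem \ref{thm-2} alone, is a remainder that is genuinely holomorphic across the line, so the $R_0$ caveat disappears. The only points you should write out in full are the uniformity in $s$ near $s^*$ of the Stirling bound $|c_k(s)|\ll e^{-c|k|}$ and the boundedness of the two geometric factors on the shifted contour $\Re z=c'$; both are routine.
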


First we will show Proposition \ref{prop-1} in Section \ref{sec-3}.   Then,
after some preparations in Section \ref{sec-4}, we will prove the main theorems
in Section \ref{sec-5}.

\section{Proof of Proposition \ref{prop-1}}\label{sec-3}

By the definition \eqref{vonMangoldt} it is clear that 
$\Lambda_{K_1}(A_1)\Lambda_{K_2}(A_2)\neq 0$ only
when $A_1=h_1P_1$ and $A_2=h_2P_2$ with $h_1,h_2\in\mathbb{N}$ and primes $P_1,P_2$. 
Therefore
\begin{align}\label{3-1}
&\Phi_2^{FF}(s)=\sum_{P_1,P_2}\sum_{h_1,h_2=1}^{\infty}
\frac{\log(NP_1)\log(NP_2)}{(N(h_1P_1)+N(h_2P_2))^s}\\
&=\sum_{P_1,P_2}\sum_{h_1,h_2=1}^{\infty}
\frac{\deg P_1 \log q_1\cdot \deg P_2 \log q_2}{(q_1^{\deg (h_1P_1)}
+q_2^{\deg (h_2P_2)})^s}\notag\\
&\leq \sum_{k_1,k_2=1}^{\infty}\sum_{\substack{P_1,P_2,h_1,h_2 \\ \deg(h_1P_1)=k_1\\
\deg(h_2P_2)=k_2}}\frac{k_1\log q_1\cdot k_2\log q_2}
{(q_1^{k_1}+q_2^{k_2})^s}.\notag
\end{align}
The number of the pairs $(h_i,P_i)$ ($i=1,2$) satisfying $\deg(h_i P_i)=k_i$
is $O(q_i^{k_i})$ by \eqref{est-b}.   Therefore the right-hand side of \eqref{3-1}
is
\begin{align}\label{3-2}
\ll_{q_1,q_2}\sum_{k_1,k_2=1}^{\infty}\frac{k_1 q_1^{k_1}\cdot k_2 q_2^{k_2}}
{(q_1^{k_1}+q_2^{k_2})^{\Re s}}
\leq \sum_{k_1=1}^{\infty}\frac{k_1}{ q_1^{k_1(\Re s/2-1)}}
\sum_{k_2=1}^{\infty}\frac{k_2}{ q_2^{k_2(\Re s/2-1)}},
\end{align}
which converges for $\Re s>2$.   The assertion of Proposition \ref{prop-1} follows.

\section {An integral expression}\label{sec-4}

We first recall the information on the distribution of zeros and poles of $\zeta_{K_i}(s)$.
From the expression \eqref{expression} we find that the poles of $\zeta_{K_i}(s)$
are given by $u_i=1$ and $q_iu_i=1$, that is, they can be written as
\begin{align}\label{pole}
\rho(a_i)=\frac{2a_i\pi \sqrt{-1}}{\log q_i},\quad 
\rho(b_i)=1+\frac{2b_i\pi\sqrt{-1}}{\log q_i}
\quad (a_i, b_i\in\mathbb{Z}).
\end{align}
These are not cancelled by the numerator, hence indeed poles and they are all simple; 
for $a_i=0$ and $b_i=0$ this fact is in \cite[Theorem 5.9]{Ro02}, and 
then the fact for other values of $a_i$ and $b_i$ follows because $\zeta_{K_i}(s)$ is 
periodic in $\Im s$ with period $2\pi\sqrt{-1}/\log q_i$.
(As for the residue at $s=1$, see \cite[p.309, formula (4)]{Ro02}.)

On the other hand, the zeros are given by $\pi_{ji}u_i=1$.    Since $|\pi_{ji}|=q_i^{1/2}$, 
we may write $\pi_{ji}=q_i^{1/2}\exp(\sqrt{-1}\arg\pi_{ji})$.
Then a zero $s$ should be a solution of
$$
q_i^{1/2}\exp(\sqrt{-1}\arg\pi_{ji}-s\log q_i)=1,
$$
and hence zeros can be written as
\begin{align}\label{zero}
\rho(c_{ji})=\frac{1}{2}+\frac{\sqrt{-1}}{\log q_i}(\arg\pi_{ji}+2c_{ji}\pi) 
\quad (c_{ji}\in\mathbb{Z},
1\leq j\leq 2g_i).
\end{align}


Differentiating \eqref{expression} we find that
\begin{align}\label{diff}
&\frac{\zeta_{K_i}^{\prime}}{\zeta_{K_i}}(s)
=-u_i\log q_i\\
&\quad\times\frac{(dL_{K_i}(u_i)/du_i)(1-u_i)(1-q_iu_i)+L_{K_i}(u_i)(1+q_i-2q_i u_i)}
{L_{K_i}(u_i)(1-u_i)(1-q_i u_i)},\notag
\end{align} 
hence all the poles and zeros of $\zeta_{K_i}(s)$ listed above are the simple poles of
$\zeta_{K_i}^{\prime}/\zeta_{K_i}(s)$.

Now we show an integral expression of $\Phi_2^{FF}(s)$.    First assume 
$\Re s>2+2\varepsilon$ with a small $\varepsilon>0$.
Then
\begin{align}\label{4-1}
&\Phi_2^{FF}(s)=\sum_{n=1}^{\infty}\frac{1}{n^s}
\sum_{\substack{A_1,A_2 \\ NA_1+NA_2=n}}\Lambda_{K_1}(A_1)\Lambda_{K_2}(A_2)\\
&=\sum_{A_1,A_2}\frac{\Lambda_{K_1}(A_1)\Lambda_{K_2}(A_2)}{(NA_1+NA_2)^s}
=\sum_{A_1,A_2}\frac{\Lambda_{K_1}(A_1)\Lambda_{K_2}(A_2)}{(NA_1)^s}
\left(1+\frac{NA_2}{NA_1}\right)^{-s}.\notag
\end{align}
Here we quote the Mellin-Barnes integral formula
\begin{align}\label{MB}
(1+\lambda)^{-s}=\frac{1}{2\pi\sqrt{-1}}\int_{(\alpha)}\frac{\Gamma(s-z)\Gamma(z)}
{\Gamma(s)}\lambda^{-z}dz,
\end{align}
where $s,\lambda\in\mathbb{C}$, $\lambda\neq 0$, $|\arg\lambda|<\pi$,
$\Re s>0$, $0<\alpha<\Re s$, and the path of integration is the vertical line from
$\alpha-\sqrt{-1}\infty$ to $\alpha+\sqrt{-1}\infty$.
Using this formula with $\lambda=NA_2/NA_1$, we find that the right-hand side of
\eqref{4-1} is
\begin{align*}
&=\sum_{A_1,A_2}\frac{\Lambda_{K_1}(A_1)\Lambda_{K_2}(A_2)}{(NA_1)^s}
\frac{1}{2\pi\sqrt{-1}}\int_{(\alpha)}\frac{\Gamma(s-z)\Gamma(z)}
{\Gamma(s)}\left(\frac{NA_2}{NA_1}\right)^{-z}dz\\
&=\frac{1}{2\pi\sqrt{-1}}\int_{(\alpha)}\frac{\Gamma(s-z)\Gamma(z)}{\Gamma(s)}
\sum_{A_1}\frac{\Lambda_{K_1}(A_1)}{(NA_1)^{s-z}}
\sum_{A_2}\frac{\Lambda_{K_2}(A_2)}{(NA_2)^z}dz.
\end{align*}
The absolute convergence of two infinite series on the right-hand side can be
verified if we choose $\alpha=1+\varepsilon$.    Then we have
\begin{align}\label{4-2}
\Phi_2^{FF}(s)=\frac{1}{2\pi\sqrt{-1}}\int_{(\alpha)}\frac{\Gamma(s-z)\Gamma(z)}{\Gamma(s)}
\frac{\zeta_{K_1}^{\prime}}{\zeta_{K_1}}(s-z)\frac{\zeta_{K_2}^{\prime}}
{\zeta_{K_2}}(z)dz.
\end{align}

Let $N\in\mathbb{N}$, and we shift the path of integration to $\Re z=-N+\varepsilon$.
Since $\zeta_{K_i}^{\prime}/\zeta_{K_i}$ is periodic with respect to $\Im s$, in view of
Stirling's formula this shifting is possible.
The relevant poles of the integrand are 

(A) $z=0,-1,-2,\ldots,-(N-1)$ coming from $\Gamma(z)$, 

(B)
$z=\rho(a_2), \rho(b_2), \rho(c_{j2})$ ($a_2,b_2,c_{j2}\in\mathbb{Z},
1\leq j\leq 2g_2$) coming from $(\zeta_{K_2}^{\prime}/\zeta_{K_2})(z)$.

The point $z=0$ is included in both (A) and (B).     Therefore $z=0$ is a double pole,
and its residue is
$$
R_0(s)=\frac{\Gamma'}{\Gamma}(s)-\Gamma'(1)
+\frac{(\zeta_{K_1}^{\prime}/\zeta_{K_1})'}{\zeta_{K_1}^{\prime}/\zeta_{K_1}}(s)
+\frac{C_{0,K_2}}{C_{-1,K_2}},
$$
where $C_{0,K_2},C_{-1,K_2}$ are determined by the Laurent expansion
$$
\zeta_{K_2}(s)=\frac{C_{-1,K_2}}{s}+C_{0,K_2}+\cdots
$$
at $s=0$.
All other poles listed in (A) and (B) are simple.   Therefore, shifting the path 
and counting the residues we obtain
\begin{align}\label{4-3}
\Phi_2^{FF}(s)=\Sigma_1(s)+\Sigma_{1/2}(s)+\Sigma_0(s)+R_0(s)+\Sigma_N(s)+I_N(s)
\end{align}
for $\Re s>2+2\varepsilon$,
where
\begin{align*}
&\Sigma_1(s)=-\sum_{b_2\in\mathbb{Z}}\frac{\Gamma(s-\rho(b_2))\Gamma(\rho(b_2))}
{\Gamma(s)}\frac{\zeta_{K_1}^{\prime}}{\zeta_{K_1}}(s-\rho(b_2)),\\
&\Sigma_{1/2}(s)=-\sum_{j=1}^{2g_2}\sum_{c_{j2}\in\mathbb{Z}}
\frac{\Gamma(s-\rho(c_{j2}))\Gamma(\rho(c_{j2}))}
{\Gamma(s)}\frac{\zeta_{K_1}^{\prime}}{\zeta_{K_1}}(s-\rho(c_{j2})),\\
&\Sigma_0(s)=-\sum_{a_2\in\mathbb{Z}\setminus\{0\}}\frac{\Gamma(s-\rho(a_2))\Gamma(\rho(a_2))}
{\Gamma(s)}\frac{\zeta_{K_1}^{\prime}}{\zeta_{K_1}}(s-\rho(a_2)),\\
&\Sigma_N(s)=\sum_{n=1}^{N-1} \binom{-s}{n}
\frac{\zeta_{K_1}^{\prime}}{\zeta_{K_1}}(s+n)
\frac{\zeta_{K_2}^{\prime}}{\zeta_{K_2}}(-n),
\end{align*}
and
$$
I_N(s)=\frac{1}{2\pi\sqrt{-1}}\int_{(-N+\varepsilon)}\frac{\Gamma(s-z)\Gamma(z)}{\Gamma(s)}
\frac{\zeta_{K_1}^{\prime}}{\zeta_{K_1}}(s-z)\frac{\zeta_{K_2}^{\prime}}
{\zeta_{K_2}}(z)dz.
$$

\section{Proof of theorems}\label{sec-5}

Now we consider how to continue the right-hand side of \eqref{4-3} meromorphically
to some wider region.   The (possible) poles of the terms on the right-hand side are given as
follows (where $a_i, b_i,c_{ji}\in\mathbb{Z}$, $1\leq j\leq 2g_i$).

$\bullet$ The poles of $\Sigma_1(s)$: $s=\rho(b_2)-n$ ($n\in\mathbb{N}_0$), 
$\rho(a_1)+\rho(b_2)$, $\rho(b_1)+\rho(b_2)$, and $\rho(c_{j1})+\rho(b_2)$.

$\bullet$ The poles of $\Sigma_{1/2}(s)$: $s=\rho(c_{j2})-n$ ($n\in\mathbb{N}_0$), 
$\rho(a_1)+\rho(c_{j2})$, $\rho(b_1)+\rho(c_{j2})$, and $\rho(c_{j1})+\rho(c_{j2})$.

$\bullet$ The poles of $\Sigma_0(s)$: $s=\rho(a_2)-n$ ($n\in\mathbb{N}_0$), 
$\rho(a_1)+\rho(a_2)$, $\rho(b_1)+\rho(a_2)$, and $\rho(c_{j1})+\rho(a_2)$.

$\bullet$ The poles of $\Sigma_N(s)$: $s=\rho(a_1)-n, \rho(b_1)-n, \rho(c_{j1})-n$
($1\leq n\leq N-1$).

$\bullet$ The poles of $R_0(s)$:
The poles of the term 
$(\zeta_{K_1}^{\prime}/\zeta_{K_1})^{\prime}/(\zeta_{K_1}^{\prime}/\zeta_{K_1})(s)$
are coming from the zeros and poles of $(\zeta_{K_1}^{\prime}/\zeta_{K_1})(s)$.
The numerator of \eqref{diff} (for $i=1$) is a polynomial in $u_1$ of order $2g_1+1$.
Denote the roots of this polynomial by $w_k=|w_k|\exp(\sqrt{-1}\arg w_k)$
($1\leq k\leq 2g_1+1$).    Then the zeros of $(\zeta_{K_1}^{\prime}/\zeta_{K_1})(s)$
are of the form
\begin{align}\label{zeros-d}
\rho(d_{k1})=&\frac{-1}{\log q_1}(\log|w_k|+\sqrt{-1}(\arg w_k+2d_{k1}\pi))\\
&\qquad\qquad (d_{k1}\in\mathbb{Z},1\leq k\leq 2g_1+1).\notag
\end{align}
The poles of $(\zeta_{K_1}^{\prime}/\zeta_{K_1})(s)$ are coming from the zeros and
the poles of $\zeta_{K_1}(s)$, which we have already known.
Therefore the poles of $R_0(s)$ are $s=-n (n\in\mathbb{N}_0), \rho(a_1), \rho(b_1),
\rho(c_{j1})$, and $\rho(d_{k1})$.

\begin{proof}[Proof of Theorem \ref{thm-1}]
Now we consider the case $p_1=p_2$, which we denote by $p$.
The above list implies that the right-hand side of \eqref{4-3} has infinitely many
(possible) poles, but they are distributed discretely.    In fact, in the 
horizontal direction they are located periodically with period $1$,  
while in the vertical direction, since $\log q_i=r_i\log p$ ($i=1,2$), the poles are
distributed periodically with period $2\pi/r_1\log p$ or $2\pi/r_2\log p$ or
$2\pi/[r_1,r_2]\log p$ (where $[r_1,r_2]$ is the minimal common multiple of
$r_1$ and $r_2$).    These poles are therefore not obstacles when we consider the
analytic continuation.

The terms $\Sigma_1(s)$, $\Sigma_{1/2}(s)$, and $\Sigma_0(s)$ are infinite series, 
but in view of the periodicity of $\zeta_{K_1}^{\prime}/\zeta_{K_1}$ in the vertical
direction and Stirling's formula, it is easy to see that these infinite series are
convergent absolutely, and uniformly in any compact subset of $\mathbb{C}$ which
does not include the poles.

Lastly, the integrand of $I_N(s)$ does not have poles if $\Re s>1-N+\varepsilon$.
Therefore we may continue $I_N(s)$ holomorphically to the region
$\Re s>1-N+\varepsilon$, and in this region $\Phi_2^{FF}(s)$ is meromorphic.
Since $N$ is arbitrary, we complete the proof of Theorem \ref{thm-1}.
\end{proof}

We encounter a totally different situation when $p_1\neq p_2$.
In the above list of the poles, the real part of the poles of the form
$\rho(b_1)+\rho(b_2)$ is $2$.   Let
$$
\mathcal{K}=\{\rho(b_1)+\rho(b_2)\mid b_1,b_2\in\mathbb{Z}\}.
$$
We first prove:

\begin{lem}\label{lem-1}
When $p_1\neq p_2$,
the set $\mathcal{K}$ is dense in the vertical line $\{s\mid \Re s=2\}$.
\end{lem}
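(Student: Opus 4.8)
The plan is to make the elements of $\mathcal{K}$ explicit and then reduce the statement to a standard fact about subgroups of $\mathbb{R}$. By \eqref{pole},
$$
\rho(b_1)+\rho(b_2)=2+2\pi\sqrt{-1}\left(\frac{b_1}{\log q_1}+\frac{b_2}{\log q_2}\right)
\qquad(b_1,b_2\in\mathbb{Z}),
$$
so every element of $\mathcal{K}$ indeed lies on the line $\Re s=2$, and $\mathcal{K}$ is dense in that line precisely when the additive subgroup
$$
\Gamma=\frac{1}{\log q_1}\,\mathbb{Z}+\frac{1}{\log q_2}\,\mathbb{Z}
$$
of $\mathbb{R}$ is dense in $\mathbb{R}$. (The factor $2\pi$ is irrelevant for this.)

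Next I would invoke the classical dichotomy: a subgroup of $\mathbb{R}$ is either cyclic (hence discrete) or dense. The group $\Gamma$ is cyclic if and only if the ratio of its two generators, namely $\log q_2/\log q_1$, is rational; so it suffices to show that $\log q_2/\log q_1$ is irrational whenever $p_1\neq p_2$. Suppose, to the contrary, that $\log q_2/\log q_1=a/b$ with $a,b\in\mathbb{N}$. Then $b\log q_2=a\log q_1$, i.e. $q_2^{\,b}=q_1^{\,a}$, which with $q_i=p_i^{r_i}$ becomes $p_2^{\,b r_2}=p_1^{\,a r_1}$. Since $p_1$ and $p_2$ are distinct primes and $r_1,r_2\geq 1$, unique factorization in $\mathbb{Z}$ forces $a r_1=b r_2=0$, hence $a=b=0$, a contradiction. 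Therefore $\log q_2/\log q_1\notin\mathbb{Q}$, $\Gamma$ is dense in $\mathbb{R}$, and Lemma \ref{lem-1} follows.

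I do not expect a genuine obstacle here: the argument is elementary, resting only on the classification of one-dimensional subgroups of $\mathbb{R}$ and on unique factorization. The single place where the hypothesis $p_1\neq p_2$ is used essentially is the irrationality of $\log q_2/\log q_1$; this is exactly the point at which the assumption $p_1=p_2$ of Theorem \ref{thm-1} would instead make $\Gamma$ cyclic, $\mathcal{K}$ discrete, and hence no obstruction to meromorphic continuation across $\Re s=2$.
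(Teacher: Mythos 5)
Your proof is correct and follows essentially the same route as the paper: both reduce the claim to the density of $\mathbb{Z}+\alpha\mathbb{Z}$ in $\mathbb{R}$ for $\alpha=\log q_1/\log q_2$ irrational, the paper via density of $\{b_2\alpha\}$ mod $1$ and you via the cyclic-or-dense dichotomy for subgroups of $\mathbb{R}$. The only substantive difference is that you spell out the unique-factorization argument for the irrationality of $\log q_1/\log q_2$, which the paper simply asserts.
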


\begin{proof}
Since
\begin{align*}
\rho(b_1)+\rho(b_2)&=2+2\pi\sqrt{-1}\left(\frac{b_1}{r_1\log p_1}+
\frac{b_2}{r_2\log p_2}\right)\\
&=2+\frac{2\pi\sqrt{-1}}{r_1\log p_1}\left(b_1+b_2\cdot\frac{r_1\log p_1}{r_2\log p_2}\right),
\end{align*}
it is enough to show that the set
$$
\mathcal{K}'=\left\{\left.b_1+b_2\cdot\frac{r_1\log p_1}{r_2\log p_2}\;
\right| \;b_1,b_2\in\mathbb{Z}\right\}
$$
is dense in $\mathbb{R}$.   Since $p_1\neq p_2$, the number
$(r_1\log p_1)/(r_2\log p_2)$ is irrational.   Therefore the set
$\{b_2\cdot(r_1\log p_1)/(r_2\log p_2)\mid b_2\in\mathbb{Z}\}$ (mod 1) is dense in
$[0,1)$, and hence $\mathcal{K}'$ is dense in $\mathbb{R}$.
\end{proof}

Here we quote the following lemma due to Gel'fond \cite{Ge35}:

\begin{lem}\label{lem-Ge}
Let $\alpha_1,\alpha_2$ be non-zero algebraic numbers with heights at most $A$, 
$\beta_1,\beta_2$ be algebraic 
numbers with heights at most $B (\geq 2)$, and 
$\Lambda=\beta_1\log\alpha_1+\beta_2\log\alpha_2$.   
Then either $\Lambda=0$ or $|\Lambda|>B^{-C}$,  where $C>0$ depends only on degrees of
$\alpha_i$s, $\beta_i$s and $A$.
\end{lem}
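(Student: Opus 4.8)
The estimate in Lemma~\ref{lem-Ge} is the classical two-logarithms linear-forms bound of Gel'fond; in what follows we only need to invoke the cited statement, but for completeness we sketch how one would prove it. Write $\mathbb{K}=\mathbb{Q}(\alpha_1,\alpha_2,\beta_1,\beta_2)$, $d=[\mathbb{K}:\mathbb{Q}]$, fix the principal branches of $\log\alpha_1,\log\alpha_2$, and set $\beta=-\beta_1/\beta_2$ (if $\beta_2=0$ the form has a single term and is handled directly). The plan is a proof by contradiction via the Gel'fond--Schneider auxiliary-function method: assuming $\Lambda\neq 0$ but $|\Lambda|\leq B^{-C}$ for a constant $C=C(d,A)$ (equivalently, depending only on the degrees of the $\alpha_i,\beta_i$ and on $A$) that is large and to be fixed at the end, we build an entire function and derive a contradiction. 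First one disposes of degenerate configurations elementarily. If $\log\alpha_1/\log\alpha_2\in\mathbb{Q}$ then $\alpha_1,\alpha_2$ are multiplicatively dependent and this ratio is a rational of height $O_{d,A}(1)$; hence $\Lambda$ equals $\log\alpha_2$ times a nonzero algebraic number of degree $O_d(1)$ and height $O_{d,A}(B)$, so a Liouville inequality already yields $|\Lambda|>B^{-C}$. If instead $\beta\in\mathbb{Q}$, clearing denominators reduces the claim to a linear form $p\log\alpha_1+q\log\alpha_2$ with $p,q\in\mathbb{Z}$, $\max(|p|,|q|)=O(B^{2})$, i.e.\ to the same estimate with rational integer coefficients. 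So we may assume $\log\alpha_1/\log\alpha_2\notin\mathbb{Q}$ and $\beta\notin\mathbb{Q}$; in particular the numbers $i_1+i_2\beta$ (and the exponents $(i_1+i_2\beta)\log\alpha_1$) over the box appearing below are pairwise distinct --- the feature that eventually forces the contradiction.

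Next I would carry out the construction. Fix integer parameters $L_1,L_2,S,T$ with $L_1L_2>2ST$, all of size a fixed power of $\log B$, and apply Siegel's lemma over $\mathbb{K}$ to produce rational integers $c_{i_1i_2}$, not all zero and of moderate height, annihilating, for $1\leq s\leq S$ and $0\leq t<T$, the algebraic numbers
\[
A_{s,t}=\sum_{i_1=0}^{L_1-1}\sum_{i_2=0}^{L_2-1}c_{i_1i_2}(i_1+i_2\beta)^{t}\,\alpha_1^{i_1 s}\alpha_2^{i_2 s}\in\mathbb{K}.
\]
The associated entire function $\Phi(z)=\sum_{i_1,i_2}c_{i_1i_2}e^{(i_1+i_2\beta)z\log\alpha_1}$ then satisfies $\Phi^{(t)}(s)=(\log\alpha_1)^{t}\bigl(A_{s,t}+O(|\Lambda|\exp(O(L_1L_2)))\bigr)$, since $e^{i_2\beta s\log\alpha_1}=\alpha_2^{i_2 s}e^{-i_2 s\Lambda/\beta_2}$ and the extra exponentials are $1+O(|\Lambda|)$ in the relevant range; hence, with $|\Lambda|\leq B^{-C}$ and $C$ large, $\Phi^{(t)}(s)$ is extremely small for all $1\leq s\leq S$, $0\leq t<T$.

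Finally I would run Gel'fond's extrapolation. Applying the maximum-modulus principle to $\Phi(z)\prod_{1\leq s\leq S}(z-s)^{-T}$ on a large disc, together with the Stirling-type growth bound for $\Phi$, one finds that $\Phi^{(t)}(s)$ stays very small for integers $s$ in a range wider than $[1,S]$ and for $t$ below some $T'<T$; then $A_{s,t}=(\log\alpha_1)^{-t}\Phi^{(t)}(s)+O(|\Lambda|\exp(O(L_1L_2)))$ is small as well, and, being an element of $\mathbb{K}$ of controlled denominator and of height $\exp(O(L_1L_2\log B))$, the Liouville inequality forces $A_{s,t}=0$. Iterating this alternation between analytic smallness and arithmetic discreteness enlarges the zero set of the $A_{s,t}$ until a generalised Vandermonde determinant in the pairwise distinct quantities $i_1+i_2\beta$ is forced to vanish, a contradiction. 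Carrying the dependence on $B$ through every estimate shows that the argument closes precisely when $|\Lambda|$ lies below a suitable $B^{-C}$ with $C=C(d,A)$, which is the assertion. The main obstacle is exactly this bookkeeping: choosing $L_1,L_2,S,T$ so that Siegel's lemma, the interpolation estimate and the Liouville bound cooperate simultaneously, and carrying the full $B$-dependence --- together with the zero-counting that rules out the Vandermonde vanishing --- cleanly through all the steps of the iteration.
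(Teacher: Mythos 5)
The paper offers no proof of this lemma at all: it is simply quoted from Gel'fond's 1935 paper, with the remark that it is a special case of Baker's general theorem on linear forms in logarithms (\cite{Ba75}, Theorem 3.1). Your decision to invoke the classical result is therefore exactly what the paper does, and your additional sketch of the Gel'fond--Schneider auxiliary-function argument (Siegel's lemma construction, extrapolation via the maximum principle, the Liouville inequality, and the final Vandermonde nonvanishing) is a faithful outline of the standard proof --- though, as you yourself note, it defers all of the quantitative bookkeeping and so is not a self-contained proof; since the paper treats the lemma as a black box, nothing more is required here.
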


This lemma is now a special case of a more general theorem; see \cite[Theorem 3.1]{Ba75}.

Using Lemma \ref{lem-Ge} we obtain the following lemma, which is the key fact in 
the proof of Theorem \ref{thm-2}.

\begin{lem}\label{lem-2}
For any element $\rho(b_1^0)+\rho(b_2^0)\in\mathcal{K}$, the sum $\Sigma_1(s)$ tends to
infinity as $s$ tends to $\rho(b_1^0)+\rho(b_2^0)$ from the right.
\end{lem}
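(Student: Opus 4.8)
The plan is to isolate the single summand of $\Sigma_1(s)$ responsible for the pole at $\omega:=\rho(b_1^0)+\rho(b_2^0)$, prove that it alone tends to infinity as $s=\omega+\delta\to\omega$ with real $\delta\downarrow 0$, and prove that the sum of the remaining summands stays bounded along this ray. Write $\Sigma_1(s)=T_0(s)+R(s)$, where
\[
T_0(s)=-\frac{\Gamma(s-\rho(b_2^0))\,\Gamma(\rho(b_2^0))}{\Gamma(s)}\,
\frac{\zeta_{K_1}^{\prime}}{\zeta_{K_1}}(s-\rho(b_2^0))
\]
is the $b_2=b_2^0$ term and $R(s)$ is the sum over $b_2\in\mathbb{Z}\setminus\{b_2^0\}$. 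For $T_0$: near $s=\omega$ the argument $s-\rho(b_2^0)$ is near $\rho(b_1^0)$, which by the discussion following \eqref{pole} is a simple pole of $\zeta_{K_1}$, so $(\zeta_{K_1}^{\prime}/\zeta_{K_1})(s-\rho(b_2^0))$ has there a simple pole with residue $-1$, while $\Gamma(s-\rho(b_2^0))\Gamma(\rho(b_2^0))/\Gamma(s)$ is holomorphic and nonzero at $s=\omega$ (none of $\rho(b_1^0),\rho(b_2^0),\omega$ is a nonpositive integer, and $\Gamma$ has no zeros). Hence $T_0$ has at $\omega$ a simple pole with residue $\Gamma(\rho(b_1^0))\Gamma(\rho(b_2^0))/\Gamma(\omega)\neq 0$, so $|T_0(\omega+\delta)|\sim|\Gamma(\rho(b_1^0))\Gamma(\rho(b_2^0))|/(|\Gamma(\omega)|\,\delta)\to\infty$ as $\delta\downarrow 0$.

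The crux is to bound $R$ on this ray. For $b_2\neq b_2^0$ the corresponding summand has, by the list in Section~\ref{sec-5}, its singularities of real part $2$ exactly at the points $\rho(b_1)+\rho(b_2)$ ($b_1\in\mathbb{Z}$); and $\rho(b_1)+\rho(b_2)=\omega$ is impossible for $b_2\neq b_2^0$, since it would force $(\log q_1)/(\log q_2)=(b_1-b_1^0)/(b_2^0-b_2)\in\mathbb{Q}$, contradicting the irrationality used in the proof of Lemma~\ref{lem-1}. Quantitatively, on $s=\omega+\delta$ (fix a small $\delta_0>0$, $0<\delta<\delta_0$) put $w=s-\rho(b_2)$; then $\Re w=1+\delta$, so $(\zeta_{K_1}^{\prime}/\zeta_{K_1})(w)$ is holomorphic, and using its periodicity in $\Im w$ with period $2\pi\sqrt{-1}/\log q_1$ together with the fact that its only singularities with $\Re w\le 1+\delta_0$ are the simple poles $\rho(b_1)$ on $\Re w=1$, one obtains $|(\zeta_{K_1}^{\prime}/\zeta_{K_1})(w)|\ll 1/\mathrm{dist}(w,\{\rho(b_1)\}_{b_1\in\mathbb{Z}})$; this distance is at least the vertical one, which equals $(2\pi/\log q_1)\,\|(b_2^0-b_2)(\log q_1/\log q_2)\|$, where $\|\cdot\|$ is the distance to the nearest integer. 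Applying Lemma~\ref{lem-Ge} with $\alpha_1=p_1$, $\alpha_2=p_2$ and with the integers $\beta_1=(b_2^0-b_2)r_1$, $\beta_2=-kr_2$, $k$ being the integer nearest to $(b_2^0-b_2)(\log q_1/\log q_2)$ (so that $\Lambda=\beta_1\log p_1+\beta_2\log p_2\neq 0$ because $p_1\neq p_2$ and $b_2\neq b_2^0$), gives $\|(b_2^0-b_2)(\log q_1/\log q_2)\|\gg(|b_2|+2)^{-C}$ with $C$ independent of $b_2$. Hence $|(\zeta_{K_1}^{\prime}/\zeta_{K_1})(s-\rho(b_2))|\ll(|b_2|+2)^{C}$, while by Stirling's formula (recall $\rho(b_2)=1+2b_2\pi\sqrt{-1}/\log q_2$ has imaginary part $\to\infty$) the factor $|\Gamma(s-\rho(b_2))\Gamma(\rho(b_2))/\Gamma(s)|$ is $\ll(|b_2|+2)^{O(1)}e^{-c|b_2|}$ for a fixed $c>0$, uniformly in $\delta\in(0,\delta_0)$. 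The exponential decay dominates, so $\sum_{b_2\neq b_2^0}$ converges uniformly in $\delta\in(0,\delta_0)$, whence $R(\omega+\delta)$ stays bounded, and $|\Sigma_1(\omega+\delta)|\ge|T_0(\omega+\delta)|-|R(\omega+\delta)|\to\infty$.

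The main obstacle is precisely this boundedness of the tail $R$. By Lemma~\ref{lem-1} the set $\mathcal{K}$ is dense in $\{\Re s=2\}$, so infinitely many poles of $\Sigma_1(s)$ accumulate at $\omega$; a priori the tail could itself be unbounded near $\omega$ or could cancel the growth of $T_0$. What makes the argument work is that along the \emph{horizontal} ray the arguments $s-\rho(b_2)$ stay a quantitatively controlled distance from the poles of $\zeta_{K_1}^{\prime}/\zeta_{K_1}$: the effective irrationality measure of $(\log q_1)/(\log q_2)$ provided by Gel'fond's theorem (Lemma~\ref{lem-Ge}) keeps that distance only polynomially small in $|b_2|$, and the exponential decay from Stirling's formula then absorbs it. A purely qualitative irrationality statement would not suffice, which is why Lemma~\ref{lem-Ge}, and not just Lemma~\ref{lem-1}, is needed.
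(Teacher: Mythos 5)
Your proof is correct and follows essentially the same strategy as the paper's: isolate the $b_2=b_2^0$ term, which blows up because $s-\rho(b_2^0)$ approaches the simple pole $\rho(b_1^0)$ of $\zeta_{K_1}'/\zeta_{K_1}$, and control the tail by combining Gel'fond's theorem (Lemma \ref{lem-Ge}), which gives a polynomially small lower bound on the distance from $s-\rho(b_2)$ to the nearest pole $\rho(b_1)$, with the exponential decay from Stirling's formula. The paper phrases the distance bound via the factor $|1-q_1u_1(s-\rho(b_2))|$ after splitting off the bounded $L$-function part, but this is the same estimate as your $\mathrm{dist}(w,\{\rho(b_1)\})$ bound, and your application of Gel'fond with $\alpha_i=p_i$, $\beta_i$ scaled by $r_i$ is equivalent to the paper's with $\alpha_i=q_i$.
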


\begin{proof}
From \eqref{diff}, for any $b_2\in\mathbb{Z}$ we have
\begin{align*}
&\frac{\zeta_{K_1}^{\prime}}{\zeta_{K_1}}(s-\rho(b_2))\\
&\quad=
  -u_1(s-\rho(b_2))\log q_1\frac{dL_{K_1}(u_1(s-\rho(b_2)))/du_1(s-\rho(b_2))}
  {L_{K_1}(u_1(s-\rho(b_2)))}\notag\\
&\qquad-\frac{1+q_1-2q_1u_1(s-\rho(b_2))}{(1-u_1(s-\rho(b_2)))
(1-q_1u_1(s-\rho(b_2)))}\notag\\
&\quad =B^*(s,b_2)+B^{**}(s,b_2),\notag
\end{align*}
say, where $u_1(s-\rho(b_2))=q_1^{-s+\rho(b_2)}$.
Accordingly we divide
\begin{align}\label{5-0}
\Sigma_1(s)&=-\sum_{b_2\in\mathbb{Z}}\frac{\Gamma(s-\rho(b_2))\Gamma(\rho(b_2))}
{\Gamma(s)}B^*(s,b_2)\\
&\quad-\sum_{b_2\in\mathbb{Z}}\frac{\Gamma(s-\rho(b_2))\Gamma(\rho(b_2))}
{\Gamma(s)}B^{**}(s,b_2)\notag\\
&=-\Sigma_1^*(s)-\Sigma_1^{**}(s),\notag
\end{align}
say.
Put $s=\rho(b_1^0)+\rho(b_2^0)+\eta$, with a small positive number $\eta$.
Then $u_1(s-\rho(b_2))=q_1^{-\rho(b_1^0)-\rho(b_2^0)+\rho(b_2)-\eta}$, 
whose absolute value is $q_1^{-1-\eta}$, 
so we see that $L_{K_1}(u_1(s-\rho(b_2)))\neq 0$, and hence
$B^*(s,b_2)$ remains bounded as $\eta\to 0$.
Therefore, again noting Stirling's formula we see that 
$\Sigma_1^*(s)$ remains finite as $\eta\to 0$.

Consider $B^{**}(s,b_2)$.    When $b_2=b_2^0$, we have 
$u_1(s-\rho(b_2^0))=q_1^{-\rho(b_1^0)-\eta}$, and so
$$
1-q_1 u_1(s-\rho(b_2^0))=1-q_1^{-2b_1^0\pi\sqrt{-1}/\log q_1 -\eta}
=1-q_1^{-\eta}
=\eta\log q_1+O(\eta^2).
$$
This implies that $B^{**}(s,b_2^0)$ diverges as $\eta\to 0$.

When $b_2\neq b_2^0$, we have
\begin{align}\label{5-1}
&1-q_1 u_1(s-\rho(b_2))=1-q_1^{-\rho(b_1^0)-\rho(b_2^0)+\rho(b_2)-\eta}\\
&\qquad=1-q_1^{2\pi\sqrt{-1}(b_2-b_2^0)/\log q_2-\eta}\notag\\
&\qquad=1-\exp\left(2\pi\sqrt{-1}(b_2-b_2^0)\frac{\log q_1}{\log q_2}\right)q_1^{-\eta}
\notag\\
&\qquad=1-\exp\left(2\pi\sqrt{-1}\left((b_2-b_2^0)\frac{\log q_1}{\log q_2}-m\right)\right)q_1^{-\eta},\notag
\end{align}
where $m$ is the integer nearest to $(b_2-b_2^0)\log q_1/\log q_2$.
Let
$$
X=(b_2-b_2^0)\frac{\log q_1}{\log q_2}-m
= \frac{1}{\log q_2}((b_2-b_2^0)\log q_1-m\log q_2).
$$
Since now $b_2\neq b_2^0$, we have $X\neq 0$.    Therefore,
applying Lemma \ref{lem-Ge} to the above with $\alpha_1=q_1$, $\alpha_2=q_2$, 
$\beta_1=b_2-b_2^0$ and $\beta_2=-m$, we obtain
$$
|X|>\frac{1}{\log q_2}B^{-C}
$$
with $C=C(q_1,q_2)>0$, where $B$ is the maximum of the heights of $b_2-b_2^0$ and
$m$.
Since $m\leq (b_2-b_2^0)\log q_1/\log q_2+1$, we find that
$$
B\leq (|b_2|+|b_2^0|)\left(1+\frac{\log q_1}{\log q_2}\right)+1,
$$
and hence
\begin{align}\label{5-2}
|X| \gg_{q_1,q_2} (|b_2|+|b_2^0|)^{-C}.
\end{align}
The right-hand side of \eqref{5-1} is
\begin{align}\label{5-3}
=1-\exp(2\pi\sqrt{-1}X)q_1^{-\eta}=q_1^{-\eta}(q_1^{\eta}-\exp(2\pi\sqrt{-1}X)).
\end{align}
Since $q_1^{\eta}\geq 1$, we find that
$$
|q_1^{\eta}-\exp(2\pi\sqrt{-1}X)|\geq |1-\exp(2\pi\sqrt{-1}X)|,
$$
so \eqref{5-3} (and hence \eqref{5-1}) is 
$$
\gg q_1^{-\eta}|1-\exp(2\pi\sqrt{-1}X)|\gg_{q_1}|X|.
$$
Combining this estimate with \eqref{5-2}, we obtain
\begin{align}\label{5-4}
|1-q_1u_1(s-\rho(b_2))|\gg_{q_1,q_2}(|b_2|+|b_2^0|)^{-C}.
\end{align}
Therefore the total contribution of the terms $b_2\neq b_2^0$ to $\Sigma_1^{**}(s)$ is
\begin{align*}
\ll \sum_{b_2\neq b_2^0}\left|\frac{\Gamma(s-\rho(b_2))\Gamma(\rho(b_2))}
{\Gamma(s)}\right|\cdot (|b_2|+|b_2^0|)^C,
\end{align*}
which is convergent absolutely by virtue of Stirling's formula.

The conclusion is that, on the right-hand side of \eqref{5-0}, only the term 
in $\Sigma_1^{**}(s)$ corresponding to $b_2^0$ is divergent as $\eta\to 0$, while
the other sums remain bounded, which implies the assertion of the lemma.
\end{proof}

\begin{remark}\label{key-ineq}
In the above proof, a key inequality is \eqref{5-2}, which may be regarded as the
function-field analogue of Conjecture (B) mentioned in Section \ref{sec-1}.
\end{remark}

\begin{proof}[Proof of Theorem \ref{thm-2}]
Now we complete the proof of Theorem \ref{thm-2}.
Lemma \ref{lem-1} and Lemma \ref{lem-2} imply that $\mathcal{K}$ is dense in
$\{s\mid \Re s=2\}$, and all points of $\mathcal{K}$ are singularities of
$\Sigma_1(s)$.    That is, $\{s\mid \Re s=2\}$ is the natural boundary of
$\Sigma_1(s)$.
On the other hand, it is easy to see that $\Sigma_{1/2}(s)$, $\Sigma_0(s)$, $\Sigma_N(s)$ 
have no pole on $\{s\mid \Re s=2\}$.

Therefore, if we can see that $R_0(s)$ also has no pole on $\{s\mid \Re s=2\}$,
then the set of singularities of $\Sigma_1(s)$ on $\{s\mid \Re s=2\}$ is exactly the
set of singularities of $\Phi_2^{FF}(s)$, and hence the conclusion follows.

So far we cannot exclude the possibility of the existence of poles of $R_0(s)$
on $\{s\mid \Re s=2\}$, but even if so, those poles would distribute discretely
(see Remark \ref{rem-R_0} below), 
and hence it does not affect the conclusion.
 \end{proof}

\begin{remark}\label{rem-R_0}
The term $R_0(s)$ has poles of the form $\rho(d_{k1})$, whose real part is given
by $-(\log |w_k|)/(\log q_1)$ (see \eqref{zeros-d}).

(i) The distribution of the roots $w_k$ has not been well studied, so at present
we cannot exclude the possibility that $|w_k|=q_1^{-2}$ for some $k$.   If so,
then the corresponding $\rho(d_{k1})$ are on $\{s\mid \Re s=2\}$.    But these
are discretely distributed; in fact, they are distributed periodically with
period $2\pi\sqrt{-1}/\log q_1$.

(ii) If $|w_k|< q_1^{-2}$ for some $k$, then the real part of the corresponding
$\rho(d_{k1})$ is larger than $2$.
Hence $\Phi_2^{FF}(s)$ has poles in the region $\Re s>2$, but this contradicts
Proposition \ref{prop-1}.    Therefore, as a by-product of our theory, we find that
$|w_k|\geq q_1^{-2}$ for all $k$. 
\end{remark}


\end{document}